\newtheorem{thm}{Theorem}[section]
\title{Bayesian inversion of a diffusion evolution equation with application to Biology.}
\author[1]{Jean-Charles Croix}
\author[1,2]{Nicolas Durrande}
\author[3]{Mauricio A. \'Alvarez}
\affil[1]{Mines Saint-Etienne, Univ Clermont Auvergne, CNRS, UMR 6158 LIMOS, Institut Henri Fayol, Departement GMI, F-42023 Saint-Etienne, France.}
\affil[2]{Prowler.io, Cambridge, UK.}
\affil[3]{Department of Computer Science, University of Sheffield, UK.}
\begin{document}

\maketitle

\begin{abstract}
    A common task in experimental sciences is to fit mathematical models to real-world measurements to improve understanding of natural phenomenon (reverse-engineering or inverse modeling). When complex dynamical systems are considered, such as partial differential equations, this task may become challenging and ill-posed. In this work, a linear parabolic equation is considered where the objective is to estimate both the differential operator coefficients and the source term at once. The Bayesian methodology for inverse problems provides a form of regularization while quantifying uncertainty as the solution is a probability measure taking in account data. This posterior distribution, which is non-Gaussian and infinite dimensional, is then summarized through a mode and sampled using a state-of-the-art Markov-Chain Monte-Carlo algorithm based on a geometric approach. After a rigorous analysis, this methodology is applied on a dataset of the post-transcriptional regulation of Kni gap gene in the early development of Drosophila Melanogaster where mRNA concentration and both diffusion and depletion rates are inferred from noisy measurement of the protein concentration.
\end{abstract}

\section{Introduction}

The problem of diffusion in a porous media, which is ubiquitous in physics, engineering and biology, is usually represented by the following partial differential equation:
\begin{equation}
\begin{aligned}
\frac{\partial y}{\partial t}(x,t)+\lambda(x,t) y(x,t)-D(x,t)\Delta y(x,t)&=f(x,t),\;\forall(x,t)\in\Omega\times]0,T],\\
y(x,t)&=0,\;\forall (x,t)\in\Omega\times\lbrace t=0\rbrace,\\
y(x,t)&=0,\;\forall (x,t)\in\partial\Omega\times]0,T].\\
\end{aligned}
\label{Eq_PDE}
\end{equation}
where the spatial domain is $\Omega\subset\mathbb{R}^n$ ($n\leq 3$) and the final time is $T\in\mathbb{R}^+$ (other initial and boundary conditions are possible). In real world applications, the quantity of interest $y$ (hereafter called the solution of equation \ref{Eq_PDE}) is typically the concentration of some chemical and evolves from a null initial state under three distinct mechanisms: a) direct variation in concentration, given by the source $f$, b) diffusion at rate $D$, c) production or depletion at a rate $\lambda$. Different hypotheses on the parameters lead to a well-posedness of this solution, which will be detailed later on. Besides the traditional computation of the solution from the parameters, one can use this model for the determination of an optimal control (e.g. source leading to the minimization of a particular cost functional) or the identification of parameters from partial knowledge of the solution in an inverse setting. This is the problem that will be of interest in this paper. The motivation comes from a a challenging identification problem in Biology where the objective is to infer jointly the decay rate $\lambda$, the diffusion rate $D$ and the source $f$ given a limited number of noisy observations of the solution $y$. Note that given their physical interpretation, the two rates and the solution must be positive so this constrain has to be taken into account in our inference scheme.

This problem has already been solved by different approaches, under distinct sets of hypotheses. In \cite{Becker2013}, the authors use a system of ordinary differential equations instead of equation \ref{Eq_PDE}, and minimizes a discrete version of a least-square type functional, while confidence intervals on parameters are given by bootstrapping. Alternative methods to solve this problem, in a Bayesian setting, is to use Latent Force Models \cite{Alvarez2013,Sarkka2017}. These approaches assume that unknown physical quantities can be modelled with Gaussian Processes \cite{Rasmussen2004}. In particular, if $f$ is Gaussian and if the decay and the diffusion are constant, $y$ is Gaussian as well (linear transformation of a Gaussian process). The two constant parameters $\lambda$ and $D$ can then be estimated through likelihood maximization. The main drawback of this approach is the difficulty to ensure positiveness of the source function $f$ for instance.

In this work, we apply a more general methodology, based on the recent advent of Bayesian Inverse Problems \cite{Stuart2010a} for infinite dimensional spaces. In a sense, it has the advantage of dealing with the ill-posedness while fully integrating the quantification of uncertainties. Moreover, the possibility of taking naturally physical constraints (such as positivity) will be particularly useful. This paper is organized as follows: section \ref{sec_BayesianInv} presents all the mathematical analysis of the forward model (mapping equation parameters to its solution) and the Bayesian methodology applied in our particular setting. Section \ref{sec_MCMCsampling} focuses on a particular Markov chain Monte-Carlo algorithm that is adapted to the problem and robust to discretization. Finally, section \ref{sec_NumericalExperiments} contains all implementation details and the numerical results obtained on a dataset associated to the developmental biology of the Drosophila Melanogaster.

\section{Bayesian inversion}
\label{sec_BayesianInv}

As previously announced, our goal in this work is to infer a source term $f$ (mRNA concentration) jointly with rates of diffusion $D$ and decay $\lambda$ (i.e. the parameter $u=(\lambda,D,f)$) from noisy and partial measurements of the solution $y$ (gap protein concentration). This problem is ill-posed for multiple reasons: a) the parameter $u$ is infinite dimensional and only finite data are available, b) the solution map is not injective and c) observations are noisy. The typical approach to alleviate this issue is to regularize the problem, usually adding constraints with Tikhonov-Philips functionals, to ensure uniqueness and continuity w.r.t. the observations \cite{Isakov2017, Schuster2012}. Doing so, the regularized problem's solution will be compatible with the dataset, but possibly very different from the \emph{real} parameter (if there is such thing). Additionally, a particularly valuable information is a representation of all parameters $u$ that would lead to similar data, giving precise statement on how the dataset is informative \cite{Ghanem2017, Biegler2011, Sullivan2015}. One approach consists in treating these 2 objectives sequentially, first regularizing then quantifying uncertainty. However, the Bayesian methodology for inverse problems (\cite{Stuart2010a} and more recently \cite{Dashti2015}) is precisely tailored to complete both tasks at once in an elegant manner. One particularity of these recent contributions is to tackle inverse problems directly in function spaces, postponing discretization at the very end for implementation purposes, leading to algorithms robust to the discretization dimension. Indeed, finite approximations of probability measures may be absolutely continuous while their infinite counterparts are mutually singular. This become particularly problematic in MCMC sampling for instance \cite{Cotter2013}.
\paragraph{}
In essence, instead of searching for one particular parameter value that would solve the regularized problem, this approach considers the probability distribution of the parameter given the data. Namely, given a prior distribution (see subsection \ref{sec_ChoicePrior}) and few technical conditions on the forward problem (parameter to data map, see subsection \ref{sec_ForwardModel}), Bayes theorem applies and exhibits a unique posterior distribution (see subsection \ref{sec_PosteriorDistribution}), which is continuous in the data (w.r.t. Hellinger metric). Finally, one may summarize information from this posterior distribution such as expected value or modes (subsection \ref{sec_MAP}). We detail in the next sections how this methodology can be applied to the problem at hand.

\subsection{Forward model analysis}
\label{sec_ForwardModel}

The first step is to detail precisely the required regularity of the solution map from equation \ref{Eq_PDE}. Using common variational techniques from PDE theory (see \cite{Evans2010} or \cite{Brezis2011}), one can show that this equation has a unique weak solution (see theorem \ref{ThmSolutionOperator} which proof is in the appendix) given $u=(\lambda,D,f)$ in a domain $\mathcal{U}$ that will be precised later on. Moreover, this solution evolves smoothly when the parameter varies. Without loss of generality and keeping in mind the Biological application, the underlying physical domain will be $\Omega=]0,L[$ with $L\in\mathbb{R}^+$.

\begin{thm}
    Let $\mathcal{P}=\mathbb{R}\times]0,+\infty[\times L^2([0,T],H^{-1}(\Omega))$, then for all $u\in\mathcal{P}$, equation \ref{Eq_PDE} has a unique weak solution, defining a map:
	\begin{equation*}
	y:u\in \mathcal{P}\to y(u)\in W(0,T,L^2,H_0^1).
	\end{equation*}
	Moreover, this map has the following properties:
	\begin{enumerate}
		\item it satisfies the following estimate $\forall u\in\mathcal{P}$,
		\begin{equation*}
			\lVert y(u)\rVert_{W(0,T,L^2,H^1_0)}\leq\frac{C}{\sqrt{D}}\lVert f\rVert_{L^2([0,T],H^{-1}(\Omega))}
		\end{equation*}
		with $C>0$ constant independent of $u$,
		\item it is locally Lipschitz, $\forall u\in\mathcal{P}$, $\forall r>0$ such that $\mathcal{B}(u,r)\subset\mathcal{P}$, $\exists L(r)>0$, $\forall (u_1,u_2)\in\mathcal{B}(u,r)$,
		\begin{equation*}
			\lVert y(u_1)-y(u_2)\rVert_{W(0,T,L^2,H^1_0)}\leq L(r)\lVert u_1-u_2\rVert_{\mathcal{P}}
		\end{equation*}
		\item it is twice Fréchet differentiable on $\mathcal{P}$.
	\end{enumerate}
	\label{ThmSolutionOperator}
\end{thm}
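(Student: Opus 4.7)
My plan is to follow the classical Lions--Magenes variational framework for linear parabolic equations, and then to extract from the a~priori estimate a sufficiently sharp bound on the solution map to reduce the Lipschitz and Fr\'echet differentiability claims to repeated applications of that same estimate. The time-independent bilinear form attached to equation~\ref{Eq_PDE} is
\[ a(\varphi,\psi) \;=\; D\int_\Omega \nabla\varphi\cdot\nabla\psi\,dx + \lambda\int_\Omega \varphi\psi\,dx, \]
continuous on $H^1_0(\Omega)^2$ but not coercive in general since $\lambda$ may be negative. The standard fix is the exponential change of unknown $z(t)=e^{-ct}y(t)$ with $c$ chosen so that $\lambda+c>0$; the transformed problem has a coercive zero-order term and a source $e^{-ct}f\in L^2(0,T;H^{-1}(\Omega))$, and Lions' theorem then yields existence and uniqueness of $z$, hence of $y$, in $W(0,T,L^2,H^1_0)$.

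For the estimate in item~1, I would test the weak formulation with $y$ and integrate in time using the zero initial condition, obtaining
\[ \tfrac{1}{2}\|y(T)\|_{L^2}^2 + D\int_0^T\|\nabla y\|_{L^2}^2\,dt + \lambda\int_0^T\|y\|_{L^2}^2\,dt = \int_0^T\langle f,y\rangle_{H^{-1},H^1_0}\,dt. \]
Bounding the duality pairing by Young's inequality with weight tuned to $D$, and absorbing a possibly negative $\lambda$ via Gr\"onwall on $\|y\|_{L^2}^2$, yields the announced estimate on $\|y\|_{L^2(0,T;H^1_0)}$, the $D^{-1/2}$ factor coming from the weight choice in Young. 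The time-derivative contribution to the $W$-norm is then recovered by reading $\partial_t y = f - \lambda y + D\Delta y$ as an identity in $L^2(0,T;H^{-1})$ and injecting the previous bound.

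The remaining items then exploit the linearity of equation~\ref{Eq_PDE}. For item~2, the difference $\delta y = y(u_2)-y(u_1)$ solves a parabolic equation with the same principal part as for $u_1$ and with source
\[ \delta f - \delta\lambda\,y(u_2) + \delta D\,\Delta y(u_2) \;\in\; L^2(0,T;H^{-1}(\Omega)); \]
applying the Step~2 estimate and bounding $\|y(u_2)\|_{L^2(H^1_0)}$ uniformly on $\mathcal{B}(u,r)$ produces the local Lipschitz constant $L(r)$. For item~3, the candidate Fr\'echet derivative $v=dy(u)\cdot h$ in the direction $h=(\delta\lambda,\delta D,\delta f)$ is defined as the solution of the analogous linearised problem with right-hand side $\delta f - \delta\lambda\,y(u) + \delta D\,\Delta y(u)$, well-posed by Step~1; the Taylor remainder $y(u+h)-y(u)-v$ satisfies a similar equation whose source involves the product of $(\delta\lambda,\delta D)$ with $y(u+h)-y(u)=O(\|h\|)$, hence is $o(\|h\|_\mathcal{P})$ by Step~3. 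Iterating the same construction on the linearised equation produces the second Fr\'echet derivative as a bounded bilinear map.

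The main technical obstacle I anticipate is recovering exactly the $D^{-1/2}$ dependence in the a~priori estimate: a brute-force test with $y$ only gives a factor $D^{-1}$, so obtaining the sharper factor stated in the theorem requires a carefully weighted Young inequality, possibly combined with the Poincar\'e inequality on the one-dimensional domain $\Omega=]0,L[$, or a more refined energy identity. Once this estimate is secured, the Lipschitz property and the twice Fr\'echet differentiability of $y(\cdot)$ follow rather mechanically from the linearity of equation~\ref{Eq_PDE} and the stability of the class $L^2(0,T;H^{-1})$ under multiplication by the bounded operators $\lambda\,\mathrm{Id}$ and $D\Delta$.
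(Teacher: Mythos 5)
Your proposal is correct and, for existence, the energy estimate and the local Lipschitz bound, it follows essentially the same variational route as the paper: well-posedness from the standard parabolic theory (the paper cites Evans Ch.~7 and uniform parabolicity for $D>0$, you invoke Lions with an exponential shift to handle a possibly negative $\lambda$ --- a point the paper glosses over, since $\mathcal{P}$ allows $\lambda<0$), and the Lipschitz property from the observation that $y(u_1)-y(u_2)$ solves the same parabolic problem with source $\delta f-\delta\lambda\,y(u_2)+\delta D\,\Delta y(u_2)$; the paper writes this out as a difference of bilinear forms $B(y_2,v,\lambda_1,D_1)-B(y_2,v,\lambda_2,D_2)$ and runs the energy/Gr\"onwall argument explicitly, which is the same computation. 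Where you genuinely diverge is item~3: the paper proves Fr\'echet differentiability abstractly, by showing the weak-form map $F(y,u)$ is differentiable with boundedly invertible partial derivative $F_y$ and invoking the implicit function theorem, whereas you construct the derivative directly as the solution of the linearised equation and estimate the Taylor remainder $w=y(u+h)-y(u)-v$, which solves the same equation with source $-\delta\lambda\,(y(u+h)-y(u))+\delta D\,\Delta(y(u+h)-y(u))=O(\lVert h\rVert^2)$ by the Lipschitz step. Your route is more elementary and makes the derivative explicit (useful later for the adjoint computations), at the cost of redoing the remainder bookkeeping at second order; the implicit-function-theorem route packages that bookkeeping once $F_y^{-1}$ is bounded. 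Finally, your worry about the exponent of $D$ is legitimate but not a gap on your side: testing with $y$ gives $\lVert y\rVert_{L^2(H^1_0)}\leq \lVert f\rVert_{L^2(H^{-1})}/D$, and the $D^{-1/2}$ factor only appears in the $L^\infty(L^2)$ (or $L^2(L^2)$) part of the norm; the paper states the estimate without derivation and in fact uses the $\lVert f\rVert/D$ form in the proof of its well-posedness theorem, so your $D^{-1}$ bound (with a constant that, as your Gr\"onwall step shows, is uniform in $u$ only if one also restricts $\lambda$ from below, e.g.\ to $\lambda\geq 0$ as on $\mathcal{U}$) suffices for all subsequent uses.
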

The proof is given in the appendix. Let us now justify why the properties given in theorem \ref{ThmSolutionOperator} are important for the Bayesian inversion (most of them used in section \ref{sec_PosteriorDistribution}):
\begin{enumerate}
	\item The energy estimate will be critical to establish the continuity of the posterior w.r.t. data, because it relates sufficient integrability conditions on $y$ w.r.t. $u$,
	\item Continuity (implied by Fréchet differentiability or local Lipschitz behaviour) will be used to show that the solution map is measurable,
	\item Second order Fréchet differentiability will be necessary for geometric methods in optimization (research of modes) and sampling (Markov-Chain Monte-Carlo),
	\item The local Lipschitz behaviour is used in the characterization of posterior modes (Maximum a Posteriori).
\end{enumerate}
In the rest of this work, we will restrict ourselves to the subset
\begin{equation*}
	\mathcal{U}:=\mathbb{R}^+\times]0,+\infty[\times\mathcal{C}([0,T]\times[0,L],\mathbb{R}),
\end{equation*}
which is implicitly equipped with the norm
\begin{equation*}
	\lVert u\rVert_{\mathcal{U}}=\vert\lambda\vert+\vert D\vert+\lVert f\rVert_{\infty}.
\end{equation*}
Since $\mathcal{U}\subset\mathcal{P}$, the solution map is well defined on this subset and keeps all its smoothness properties. Moreover, one can show that a weak solution of equation \ref{Eq_PDE} for $u\in\mathcal{U}$ is also a strong solution \cite{Brezis2011,Evans2010}, but we don't need these regularity results here.

\subsection{Choice of prior distribution}
\label{sec_ChoicePrior}

The second step is to choose a prior probability distribution on $\mathcal{U}$, encoding all knowledge on the physics at hand, while being simple enough to keep analysis tractable. Here are the constraints given by the biological application:
\begin{itemize}
	\item $(\lambda,D)$ must be positive (which imposes decay, not production),
	\item $f$ must be positive and continuous at all time and position (it is a concentration).
\end{itemize}
Starting with the depletion and diffusion parameters $(\lambda, D)$, we choose respectively Borel prior distributions $\mu_0^\lambda$ and $\mu_0^D$ on $\mathbb{R}^+$ with densities w.r.t. Lebesgue's measure. Now, since $f$ must be positive, we re-parametrize the problem with
\begin{equation}
	f^*=\exp(f),
	\label{Eq_Reparametrization}
\end{equation}
where $f\in\mathcal{C}([0,T]\times[0,L],\mathbb{R})$. By selecting a Borel probability measure $\mu_0^f$ on the Banach space $\mathcal{C}([0,T]\times[0,L],\mathbb{R})$, both continuity and positivity of the new source $f^*$ will be ensured almost-surely. In this work, we choose $\mu_0^f$ as a Gaussian distribution with covariance operator $\mathcal{C}$ with continuous realizations (see \cite{Bogachev1998} for a presentation of infinite dimensional Gaussian measures). Finally, we assume independence between the three components, leading to the following prior distribution:
\begin{equation}
\mu_0(du):=\mu_0^\lambda(d\lambda)\otimes\mu_0^D(dD)\otimes\mu_0^f(df).
\label{EqPriorMeasure}
\end{equation}
These choices clearly ensure positivity of $u$ (in the previous sense) and $\mu_0(\mathcal{U})=1$. The exponential map in equation \ref{Eq_Reparametrization} can be replaced with any sufficiently differentiable function from $\mathbb{R}$ to $\mathbb{R}^+$ (to keep the second order Fréchet differentiability of the solution map). Alternative distributions are possible for $f$ (Besov measure from \cite{Dashti2013} or more general convex measures from \cite{Hosseini2017a}) for the regularization. In practice however, our choice is also motivated by the fact that one can find a Gaussian reference measure $\mu_{ref}$ in the form
\begin{equation*}
	\mu_{ref}=\mu_{ref}^\lambda\otimes\mu_{ref}^D\otimes\mu_{ref}^f=
	\mathcal{N}(\lambda_{ref},\sigma_\lambda^2)\otimes \mathcal{N}(D_{ref},\sigma_D^2)\otimes \mathcal{N}(0,\mathcal{C})=\mathcal{N}(u_{ref},\mathcal{C}_{ref})
	\label{Eq_GaussianRef}
\end{equation*}
where $u_{ref}=(\lambda_{ref},D_{ref},0)$ and
\begin{equation*}
	\mathcal{C}_{ref}:u\in\mathbb{R}^2\times L^2([0,L])\to (\sigma^2_\lambda\lambda,\sigma^2_DD,\mathcal{C}f)\in\mathbb{R}^2\times L^2([0,L]),
\end{equation*}
such that $\mu_0<<\mu_{ref}$. Indeed, choose $(\lambda_{ref},D_{ref})\in\mathbb{R}^2$ and $\sigma_\lambda^2,\sigma_D^2>0$ then $\mu_0<<\mu_{ref}$ with
\begin{equation*}
\frac{d\mu_0}{d\mu_{ref}}(u)=\frac{d\mu_0^\lambda}{d\mu_{ref}^\lambda}(\lambda)\frac{d\mu_0^D}{d\mu_{ref}^D}(D).
\label{Eq_RNdensityRef}
\end{equation*}
This reference will be critical for modes analysis (section \ref{sec_MAP}) and MCMC sampling (section \ref{sec_MarkovKernel}).

\subsection{Posterior distribution}
\label{sec_PosteriorDistribution}

The third and last step in the theoretical analysis of Bayesian inversion is to show that this particular setting (forward model and prior distribution) leads to a well defined posterior measure using Bayes theorem. This is the purpose of theorem \ref{ThmPosteriorMeasure} which is a direct application of the theory initially developed in \cite{Stuart2010a}. Consider a dataset $z=(z_i)_{i\in[1,n]}$ which corresponds to observations at different times and locations $(t_i,x_i)_{i\in[1,n]}$ and assume they are produced from the following model (in vector notations):
\begin{equation}
z=\mathcal{G}(u)+\eta,
\label{EqObservationModel}
\end{equation}
where $\eta\sim\mathcal{N}(0,\sigma_\eta^2I_n)$ ($I_n$ being the identity matrix of dimension $n$) and $\mathcal{G}:\mathcal{U}\to\mathbb{R}^n$ is the observation operator, mapping directly the PDE parameter $u$ to the value of the associated solution $y$ at measurement locations (composition of solution map $y$ with Diracs). The following theorem, which is proved in the appendix, establishes the existence of a posterior probability measure $\mu_z$ (the solution of our inverse problem), expressing how observations $z$ changed prior beliefs $\mu_0$ on the parameter $u$.
\begin{thm}
	Let $\mathcal{G}$ be the observation operator defined in equation \ref{EqObservationModel} and $\mu_0$ the probability measure defined in equation \ref{EqPriorMeasure} (satisfying $\mu_0(\mathcal{U})=1$), then there exists a unique posterior measure $\mu_z$ for $u\vert z$. It is characterized by the following Radon-Nikodym density w.r.t. $\mu_0$:
	\begin{equation*}
	\frac{d \mu_z}{d\mu_0}(u)=\frac{1}{Z(z)}\exp\left(-\phi(u;z)\right),
	\end{equation*}
	with
	\begin{equation*}
	\phi(u;z)=\frac{1}{2\sigma_\eta^2}\lVert z-\mathcal{G}(u)\rVert^2,
	\end{equation*}
	and
	\begin{equation*}
	Z(z)=\int_\mathcal{U}\exp(-\phi(u;z)\mu_0(dz).
	\end{equation*}
	Furthermore, the two following integrability conditions:
	\begin{itemize}
		\item $\mathbb{E}^{\mu_0^D}\left[D^{-\frac{1}{2}}\right]<+\infty$,
		\item $\exists\kappa>0,\;\mathbb{E}^{\mu_0^f}\left[\exp(\kappa\lVert f\rVert_\infty)\right]<+\infty$,
	\end{itemize}
	imply the continuity of $\mu_z$ in the data w.r.t. Hellinger distance.
	\label{ThmPosteriorMeasure}
\end{thm}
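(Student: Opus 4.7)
The plan is to realise the theorem as a verbatim application of the abstract Bayesian inversion framework of \cite{Stuart2010a} (see also \cite{Dashti2015}), by verifying the standard hypotheses on the log-likelihood $\phi(u;z)=\tfrac{1}{2\sigma_\eta^2}\lVert z-\mathcal{G}(u)\rVert^2$. Concretely, four properties have to be established: (i) $\phi$ is non-negative and locally bounded on bounded sets of $\mathcal{U}\times\mathbb{R}^n$; (ii) $\phi$ is locally Lipschitz in $u$, uniformly on bounded sets of $z$; (iii) there exist integrable envelopes controlling $\phi(u;z)$ as $\lVert u\rVert_{\mathcal{U}}\to\infty$; and (iv) $\phi$ is Lipschitz in $z$ with a Lipschitz constant integrable against $\mu_0$.

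For (i) and (ii), I would write $\mathcal{G}=\delta_{(t_i,x_i)}\circ y\circ R$, where $R:u\mapsto(\lambda,D,\exp(f))$ is the re-parametrization. Theorem \ref{ThmSolutionOperator} shows that $y$ is locally Lipschitz and twice Fr\'echet differentiable on $\mathcal{P}$, and composition with the smooth map $R$ together with linear point evaluations preserves these regularities, which transfer directly to $\phi$. Non-negativity of $\phi$ makes $Z(z)$ automatically finite, while $Z(z)>0$ follows from continuity of $\phi$ together with the fact that $\mu_0$ has full topological support (the product of densities on $\mathbb{R}^+$ and a centred Gaussian measure on a separable Banach space charges every open set containing a Cameron--Martin translate). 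Bayes' rule in its infinite-dimensional form from \cite{Stuart2010a} then yields the claimed Radon--Nikodym expression.

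The Hellinger continuity depends on the decisive items (iii) and (iv). The energy estimate in Theorem \ref{ThmSolutionOperator} combined with $\lVert \exp(f)\rVert_{L^2([0,T],H^{-1}(\Omega))}\lesssim \exp(\lVert f\rVert_\infty)$ yields
\[
\lVert\mathcal{G}(u)\rVert\;\lesssim\;\frac{1}{\sqrt{D}}\,\exp(\lVert f\rVert_\infty).
\]
This is the single place where the specific structure of our problem enters the argument. Using the identity $\lVert z_1-\mathcal{G}\rVert^2-\lVert z_2-\mathcal{G}\rVert^2=\langle z_1-z_2,\,z_1+z_2-2\mathcal{G}\rangle$, one obtains $|\phi(u;z_1)-\phi(u;z_2)|\lesssim(1+\lVert\mathcal{G}(u)\rVert)\lVert z_1-z_2\rVert$ uniformly for $z_i$ in a ball. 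Taking expectation against $\mu_0$ and exploiting independence of the three factors turns this into a bound involving precisely $\mathbb{E}^{\mu_0^D}[D^{-1/2}]$ and $\mathbb{E}^{\mu_0^f}[\exp(\lVert f\rVert_\infty)]$, both finite under the stated hypotheses (applied with $\kappa$ large enough to absorb the finitely many constants that appear in the calculation). An analogous upper bound $\phi(u;z)\leq C(1+D^{-1}\exp(2\lVert f\rVert_\infty))$ combined with Jensen's inequality then delivers a uniform-in-$z$ lower bound for $Z(z)$ on bounded sets of data.

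The main obstacle, and the only substantive point beyond a black-box invocation of \cite{Stuart2010a}, is matching the two singularities of the forward model -- the $D^{-1/2}$ blow-up in the energy estimate and the exponential blow-up induced by the re-parametrization $f\mapsto \exp(f)$ -- with the available prior integrability. The two bulleted hypotheses are tailored exactly to this: the first handles the diffusion singularity, while the second plays the role of a Fernique-type condition adapted to the non-linear re-parametrization of the source. Everything else is either an immediate consequence of Theorem \ref{ThmSolutionOperator} or a direct transcription of the general results of \cite{Stuart2010a} to our specific setting.
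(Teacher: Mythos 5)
Your proposal follows essentially the same route as the paper's proof: measurability of $\Phi$ from the regularity of the solution map, finiteness and positivity of $Z(z)$, and Hellinger well-posedness via the Lipschitz-in-$z$ bound $\vert\Phi(u;z_1)-\Phi(u;z_2)\vert\leq g(u)\lVert z_1-z_2\rVert$ with $g$ built from the energy estimate and made $\mu_0$-integrable through independence and the two moment conditions, before invoking the general theorem of \cite{Stuart2010a}. The only differences are cosmetic (your support-based argument for $Z(z)>0$ versus the paper's direct strict positivity of the integrand), and your envelope $\exp(\lVert f\rVert_\infty)/\sqrt{D}$ is in fact more faithful to the stated $\mathbb{E}^{\mu_0^D}[D^{-1/2}]$ condition than the paper's own proof, which slips to $D^{-1}$.
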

Theorem \ref{ThmPosteriorMeasure} gives two distinct results: a) the existence and uniqueness of a posterior (as long as $\mu_0$ is Radon and $\mu_0(\mathcal{U})=1$), b) well-posedness of the Bayesian inverse problem under additional integrability conditions of certain functions. In particular, the need for an exponential moment under $\mu_0^f$ comes from the re-parametrization in equation \ref{Eq_Reparametrization}. If one chooses a different map between $f^*$ and $f$, this condition may be considerably relaxed (using a polynomial map for instance).

\subsection{Maximum a posteriori}
\label{sec_MAP}

In the previous section, we proved the well-posedness of the Bayesian inverse problem under specific integrability conditions. However, the posterior distribution is only known up to a multiplicative constant, through its density w.r.t. $\mu_0$. In our application, we will need to summarize $\mu_z$, which is usually done by the selection of elements of interest in $\mathcal{U}$, such as the conditional mean (CM) or its modes. We saw in theorem \ref{ThmPosteriorMeasure} that the posterior expected value is continuous in the data (consequence of Hellinger continuity, see \cite{Dashti2015}) but its optimality properties (under the frequentist paradigm) are not yet well-understood in infinite dimension to the best of our knowledge. This is why posterior modes (or Maximum a Posteriori) are more and more considered instead. Indeed, they provide a clear link with Tikhonov-Philips regularization (see \cite{Dashti2013,Helin2015,Agapiou2017}) and a practical optimization problem (in case of Gaussian or Besov priors) which can be solved numerically, see theorem \ref{ThmMAP} (which proof is given in the appendix).
\begin{thm}
	Let $\mu_0$ be the prior probability measure defined in equation \ref{EqPriorMeasure} and $\mu_{ref}$ the Gaussian reference measure from equation \ref{Eq_RNdensityRef}. Suppose additionally that
	\begin{equation*}
		u\in\mathcal{U}\to ln\left(\frac{d\mu_0}{d\mu_{ref}}(u)\right)\in\mathbb{R}
	\end{equation*}
	is locally Lipschitz, then the modes of $\mu_z$ are exactly the minimizers of the following (generalized) Onsager-Mashlup functional:
	\begin{equation*}
		I(u):=\Phi(u;z)+\frac{1}{2}\lVert u-u_{ref}\rVert^2_{\mu_{ref}}-ln\left(\frac{d\mu_0}{d\mu_{ref}}(u)\right),
	\end{equation*}
	where $\lVert.\rVert_{\mu_{ref}}$ and $u_{ref}$ are respectively the norm of the Cameron-Martin space and the mean of $\mu$. A minimizer will be noted $u_{MAP}=(\lambda_{MAP},D_{MAP},f_{MAP})$.
	\label{ThmMAP}
\end{thm}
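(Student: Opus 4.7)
The plan is to reduce the problem to the known Onsager--Machlup characterization of modes for measures absolutely continuous with respect to a Gaussian reference, as developed by Dashti--Law--Stuart--Voss and extended by Helin--Burger and Agapiou et al. The first step is to write the posterior density with respect to the Gaussian reference measure $\mu_{ref}$ by chaining Radon--Nikodym derivatives:
\begin{equation*}
\frac{d\mu_z}{d\mu_{ref}}(u) = \frac{d\mu_z}{d\mu_0}(u)\cdot\frac{d\mu_0}{d\mu_{ref}}(u)
= \frac{1}{Z(z)}\exp\!\left(-\Psi(u;z)\right),
\end{equation*}
with the effective potential
\begin{equation*}
\Psi(u;z) := \Phi(u;z) - \ln\!\left(\frac{d\mu_0}{d\mu_{ref}}(u)\right).
\end{equation*}
The well-posedness of this manipulation relies on $\mu_0 \ll \mu_{ref}$, which was established in Section~\ref{sec_ChoicePrior}.

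Next I would verify that $\Psi(\cdot;z)$ satisfies the hypotheses required by the abstract MAP theorem: namely, $\Psi$ must be locally bounded below and locally Lipschitz on $\mathcal{U}$. Local Lipschitz continuity of $\Phi(\cdot;z) = \tfrac{1}{2\sigma_\eta^2}\lVert z-\mathcal{G}(u)\rVert^2$ follows from the local Lipschitz property of $y(u)$ established in Theorem~\ref{ThmSolutionOperator}, together with continuity of the point-evaluation functionals composing $\mathcal{G}$ (one uses the embedding of $W(0,T,L^2,H_0^1)\cap C$-regularity obtained from $u\in\mathcal{U}$). Local Lipschitzness of $\ln(d\mu_0/d\mu_{ref})$ is the explicit hypothesis in the theorem. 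Summing two locally Lipschitz functions gives $\Psi$ locally Lipschitz, and non-negativity of $\Phi$ plus local boundedness of the log-density yields the lower bound.

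I would then invoke the Gaussian reference MAP theorem (in the form proved for densities of this type, e.g.\ Dashti--Law--Stuart--Voss for Gaussian priors, generalized by Helin--Burger to non-Gaussian priors absolutely continuous w.r.t.\ a Gaussian reference). The statement is that a point $u^{\star}$ is a mode of $\mu_z$ (in the small-ball sense $\lim_{\epsilon\to 0}\mu_z(B(u^\star,\epsilon))/\sup_u\mu_z(B(u,\epsilon))=1$) if and only if $u^\star - u_{ref}$ belongs to the Cameron--Martin space of $\mu_{ref}$ and $u^\star$ minimizes the Onsager--Machlup functional
\begin{equation*}
I(u) = \Psi(u;z) + \tfrac{1}{2}\lVert u-u_{ref}\rVert_{\mu_{ref}}^2
     = \Phi(u;z) + \tfrac{1}{2}\lVert u-u_{ref}\rVert_{\mu_{ref}}^2 - \ln\!\left(\frac{d\mu_0}{d\mu_{ref}}(u)\right),
\end{equation*}
with $I(u)=+\infty$ outside the Cameron--Martin space. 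This is exactly the claimed expression.

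The main obstacle, and the reason this is not completely routine, is the small-ball asymptotics underlying the Onsager--Machlup principle in infinite dimensions: one must show that the $\mu_{ref}$-mass of balls $B(u,\epsilon)$ concentrates, up to logarithmic equivalents, around the Cameron--Martin norm contribution, uniformly enough that the locally Lipschitz factor $\exp(-\Psi)$ can be pulled out of the integral. In the product space $\mathcal{U} = \mathbb{R}^+\times\,]0,+\infty[\,\times C([0,T]\times[0,L])$, the finite-dimensional components $(\lambda,D)$ contribute standard Lebesgue-density logarithms (handled via the local Lipschitz hypothesis) while the infinite-dimensional Gaussian component on $C([0,T]\times[0,L])$ requires the Cameron--Martin tilt estimate for the supremum norm. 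Once these estimates are assembled via the covering arguments in the Helin--Burger style, the rest of the argument is essentially bookkeeping, and the equivalence between modes of $\mu_z$ and minimizers of $I$ follows.
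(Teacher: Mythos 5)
Your proposal follows essentially the same route as the paper's proof: chaining the Radon--Nikodym derivatives to obtain a density $\exp(-\Phi(u;z)+\ln\frac{d\mu_0}{d\mu_{ref}}(u))$ with respect to the Gaussian reference, checking that this effective potential is locally Lipschitz (via Theorem~\ref{ThmSolutionOperator} for $\Phi$ and the stated hypothesis for the log-density), and then applying the small-ball Onsager--Machlup argument of Dashti et al. The only cosmetic difference is that the paper writes out the $J^\delta$ ball-ratio limit explicitly while you invoke the abstract MAP theorem, which rests on exactly the same estimates.
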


The precise application of this theorem to our biological setting is done in section \ref{sec_ChoiceOfPriors}.

\section{Metropolis-Hastings algorithm}
\label{sec_MCMCsampling}

As it was previously announced, our motivation for the Bayesian methodology is the quantification of uncertainty, which will be done by simulation. Among the vast catalogue of methods for probability distributions sampling (Sequential Monte-Carlo, Approximate Bayesian Computations, Transport Maps, etc...), Markov chain Monte-Carlo is very popular (MCMC, see \cite{Brooks2011}) and well defined on function spaces \cite{Tierney1998} even though ergodicity analysis of such algorithm is still in its infancy \cite{Hairer2005,Hairer2007,Hairer2014,Rudolf2018}. After a short presentation of the functional Metropolis-Hastings algorithm (section \ref{sec_MH}), we will focus on a state-of-the-art Markov kernel designed to sample from Gaussian measures (section \ref{sec_MarkovKernel}) and adapt it when the prior is not Gaussian, but absolutely continuous with respect to a Gaussian reference.

\subsection{Metropolis-Hastings on function spaces}
\label{sec_MH}

The Metropolis-Hastings algorithm (MH) is a very general \cite{Tierney1998} method to design Markov chains to sample from a given probability measure. It is based on a two-step process on each iteration:
\begin{enumerate}
    \item Given a current state $u\in\mathcal{U}$, propose a new candidate $v$ according to a proposal Markov kernel $Q(u,dv)$ (it is a probability distribution on $\mathcal{U}$ for almost any $u\in\mathcal{U}$),
	\item Accept the new state $v$ with probability $\alpha(u,v)$ or remain at $u$.
\end{enumerate}
This algorithm provides a sample distributed under a predefined probability measure $\mu$, if one selects $\alpha$ and $Q$ in a specific way (see \cite{Tierney1998} for a discussion in general state spaces). For instance, let $\nu(du,dv)=\mu(du)Q(u,dv)$ and $\nu^T(du,dv)=\mu(dv)Q(v,du)$, the Metropolis-Hastings algorithm typically considers the following acceptance probability:
\begin{equation}
\alpha_{MH}(u,v)=\min\left(1, \frac{d\nu^T}{d\nu}(u,v)\right),
\end{equation}
which, in particular, requires the absolute continuity of $\nu^T$ w.r.t. $\nu$ (detailed balance of the Markov chain). Contrary to finite dimensional situations, this condition may be difficult to satisfy and a common way to overcome this situation in Bayesian Inverse problems (see \cite{Dashti2015,Girolami2011,Beskos2017a,Cotter2013,Hairer2014}) is to select $Q$ revertible w.r.t. $\mu_0$. Indeed, in this case (with $\nu_0(du,dv)=\mu_0(du)Q(u,dv)$):
\begin{equation}
	\frac{d\nu^T}{d\nu}(u,v)=\frac{\frac{d\nu^T}{d\nu^T_0}(u,v)}{\frac{d\nu}{d\nu_0}(u,v)}=\frac{\frac{d\mu_z}{d\mu_0}(v)}{\frac{d\mu_z}{d\mu_0}(u)}=\exp\left(\phi(u;z)-\phi(v;z)\right).
	\label{Eq_Acceptance}
\end{equation}
In theory, the MH algorithm may be implemented with a large family of proposal kernels $Q$. In practice however, they need to be as efficient as possible and thus adapted to the problem at hand. Two common desirable properties for $Q$ are:
\begin{itemize}
	\item Adjust the proposal to locally mimic the target distribution $\mu_z$,
	\item Include a step size to tune acceptance probability.
\end{itemize}
These two properties may be used to trade-off self-correlation, acceptance rates and convergence speed to high interest areas of the parameter space. The next section presents an algorithm with both properties adapted to Gaussian priors which will then be applied to the problem at hand.

\subsection{Geometric MCMC under Gaussian reference}
\label{sec_MarkovKernel}

We are now going to detail a specific Markov proposal kernel $Q$, tailored to sample distributions having a density w.r.t. a Gaussian measure $\mu_{ref}$. Most of recent work on infinite dimensional MCMC methods are based on the following Langevin stochastic differential equation:
\begin{equation}
	\frac{du}{dt}=-\frac{1}{2}K(u)\left(\mathcal{C}_{ref}^{-1}(u-u_{ref})+\nabla_u\phi(u;z)\right)+\sqrt{K(u)}\frac{dW}{dt},
	\label{Eq_Langevin}
\end{equation}
where $K(u)$ is a (possibly position-dependent) preconditioner, $W$ a cylindrical Brownian motion and $\nabla_u\phi(u;y)$ the gradient in $u$ of the negative log-likelihood. According to \cite{Beskos2017a}, a semi-implicit discretization of equation \ref{Eq_Langevin} leads to a Markov chain with the following kernel:
\begin{equation}
	Q(u,dv)=\mathcal{N}\left(\rho (u-u_{ref})+u_{ref}+\sqrt{1-\rho^2}\frac{\sqrt{h}}{2}g(u),K(u)\right),
	\label{Eq_MarkovKernel}
\end{equation}
where $h>0$ is a step-size parameter, $\rho=\frac{1-h}{1+h}$ and:
\begin{equation*}
	g(u)=-K(u)\left[(\mathcal{C}_{ref}^{-1}-K(u)^{-1})(u-u_{ref})+\nabla_u\phi(u;z)\right].
\end{equation*}
This dynamic explores the parameter space with a balance between Newton-type descent to zones of high density and Gaussian exploration. The philosophy behind this kernel is to use alternative Gaussian reference measures locally adapted to the posterior distribution, since it has been recently showed that higher efficiency is obtained from operator weighted proposals (\cite{Law2014} and later generalized in \cite{Beskos2017a} and \cite{Cui2016b}). Indeed, highly informative datasets may result in a posterior measure significantly different from the prior in likelihood-informed directions and non-geometric kernels (such as Independent sampler or preconditioned Crank-Nicholson) are ineffective in this case. However, the infinite dimensional manifold Modified Adjusted Langevin Algorithm ($\infty$-mMALA) considers a specific preconditioner:
\begin{equation*}
	K(u)=\left(\mathcal{C}_{ref}^{-1}+H_\Phi(u)\right)^{-1},
\end{equation*}
where $H_\Phi(u)$ is the Gauss-Newton Hessian matrix of $\phi$, which locally adapts to the posterior. This kernel does not preserve the distribution $\mu_z$ but is shown to be absolutely continuous w.r.t. the reference measure $\mu_{ref}$, almost-surely in $u$ (under technical assumptions regarding $K(u)$) and the Radon-Nikodym density is:
\begin{equation*}
	\frac{dQ(u,dv)}{d\mu_{ref}}(v)=\frac{dN\left(\frac{\sqrt{h}}{2}g(u),K(u)\right)}{dN(0,\mathcal{C})}\left(\frac{v-\rho (u-u_{ref})-u_{ref}}{\sqrt{1-\rho^2}}\right),
\end{equation*}
and noting $w=\frac{v-\rho (u-u_{ref})-u_{ref}}{\sqrt{1-\rho^2}}$ as it is done in \cite{Beskos2017a}, it finally comes:
\begin{equation*}
\begin{split}
\frac{dQ(u,dv)}{d\mu_{ref}}(u,v)&=\exp\left(-\frac{h}{8}\vert K(u)^{-\frac{1}{2}}g(u)\vert^2+\frac{\sqrt{h}}{2}\langle K(u)^{-\frac{1}{2}}g(u),K(u)^{-\frac{1}{2}}w\rangle\right.\\
&\left.-\frac{1}{2}\langle w,H_\Phi(u)w\rangle\right)\left\vert\mathcal{K}^{\frac{1}{2}}K(u)^{-\frac{1}{2}}\right\vert.
\end{split}
\end{equation*}
Finally, the acceptance probability associated to the Markov kernel from equation \ref{Eq_MarkovKernel} is
\begin{equation*}
	\alpha(u,v)=\min\left(1,\frac{\frac{dQ}{d\mu_{ref}}(v,u)\frac{d\mu_z}{d\mu_0}(v)}{\frac{dQ}{d\mu_{ref}}(u,v)\frac{d\mu_z}{d\mu_0}(u)}\right).
\end{equation*}
This algorithm is well-defined on function spaces (reversibility is ensured w.r.t. $\mu_0$), thus it is robust to discretization as required. The $\infty$-mMALA proposal may be computationally expensive, as it requires to compute both gradient $\nabla\Phi$, Gauss-Newton Hessian $H_\Phi$ and the Cholesky decomposition of $K(u)^{-1}$ at each step. However, different dimension reduction techniques can be used (split in \cite{Beskos2017a} or likelihood-informed in \cite{Cui2016b}) to reduce the computational burden. A second alternative is to choose a constant preconditioner, located at a posterior mode for instance (similar to HMALA in \cite{Cui2016b} and gpCN in \cite{Rudolf2018}).

\section{Numerical application}
\label{sec_NumericalExperiments}

We now turn to the practical implementation of the previous methodology on the problem of reverse-engineering for post-transcriptional gap-gene in Drosophila Melanogaster. First, we precise our choice of distributions for the parameters compatible with previous assumptions, and give a random series representation for $f$ and precise the generalized Onsager-Mashlup functional. Then, we provide quantitative results on the dataset taken from \cite{Becker2013}, consisting in protein concentration measurements irregularly spread in space and time.

\subsection{Choice of measures}
\label{sec_ChoiceOfPriors}

We will now specify our choice of prior measure $\mu_0$ with justifications:
\begin{itemize}
	\item $\mu_0^\lambda,\mu_0^D$. Concerning the decay parameter, the only constraint given in the problem so far is positivity and Lebesgue density. However, the diffusion must also satisfy an integrability condition from theorem \ref{ThmPosteriorMeasure}, which is clearly the case for uniform distributions. Finally, we choose $\mu_0^\lambda=\mathcal{U}([0,\lambda_m])$ and $\mu_0^D=\mathcal{U}([0,D_m])$, maximum parameters $\lambda_m$ and $D_m$ being tuned to $0.5$ to be sufficiently large w.r.t. previous estimations from \cite{Becker2013}.
	\item $\mu^f_0$. The prior measure $\mu_0^f$ will be chosen as a centred Gaussian measure on $L^2([0,T]\times[0,L])$, with covariance operator on $L^2([0,T]\times[0,L])$:
	\begin{equation*}
	\mathcal{C}=(-\Delta)^{-\alpha},
	\end{equation*}
	where $\alpha\in\mathbb{R}^+$ is a smoothness parameter tuned to ensure almost-surely continuity of the samples. The precise eigen-decomposition is given as follows $i_1,i_2\geq 1$:
	\begin{eqnarray*}
		\varphi_{i_1,i_2}(x,t)&=&\frac{1}{\sqrt{LT}}\sin\left(i_1\frac{\pi }{L}x\right)\sin\left(i_2\frac{\pi}{T}t\right),\\
		\lambda_{i_1,i_2}&=& \left(\left(\frac{i_1}{\pi L}\right)^2+\left(\frac{i_2}{\pi T}\right)^2\right)^{-\alpha}.
	\end{eqnarray*}
	\item With these choices done, the Radon-Nikodym density of the prior distribution w.r.t. the reference measure is
	\begin{equation*}
		\frac{d\mu_0}{d\mu_{ref}}(u)=\frac{2\pi\sigma_\lambda\sigma_D}{\lambda_mD_m}\exp\left(\frac{(\lambda-\lambda_{ref})^2}{2\sigma_\lambda^2}+\frac{(\lambda-D_{ref})^2}{2\sigma_D^2}\right)\mathbbm{1}_{[0,\lambda_m]}(\lambda)\mathbbm{1}_{[0,D_m]}(D),
	\end{equation*}
\end{itemize}
which is locally Lipschitz (mean-value theorem). We now tune the parameters of $\mu_{ref}$, by simply choosing $\lambda_{ref}=\frac{\lambda}{2}$, $\sigma_\lambda^2=\frac{\lambda_m^2}{12}$, $D_{ref}=\frac{D_m}{2}$ and $\sigma_D^2=\frac{D_m^2}{12}$ (minimizing Kullback-Leibler divergence). Finally, we are capable of specifying the exact form of the generalized Onsager-Mashlup functional using theorem \ref{ThmMAP}:
\begin{equation*}
\begin{split}
I(u)&=\frac{1}{2\sigma_\eta^2}\lVert z-\mathcal{G}(u)\rVert^2+\frac{1}{2}\lVert f\rVert_{\mu_0^f}^2.
\end{split}
\label{eq_gen_ons_mach}
\end{equation*}
In particular, parameters $\lambda$ and $D$ are only influenced by their range and contribution to the likelihood (the uniform prior is non-informative), contrary to the source $f$.

\subsection{Prior and solution map discretization}
\label{sec_Discretization}

The analysis conducted in all previous sections happens to be valid for infinite dimensional quantities. In practice however, one needs to discretize for numerical experiments. In this work, the solution map is approximated using finite elements in space (FEniCS library in Python, see \cite{Alnae2015} and \cite{Langtangen2017}) and finite differences in time. We use 100 basis functions and 30 time steps on a desktop computer (Intel i7-3770 with 8Gb of RAM memory)\footnote{All codes are available online at \url{https://github.com/JeanCharlesCroix/2018_Bayesian_estimation}.}. We set $L=100$ and $T=100$ is the final time. Concerning the prior measure, we use a truncated Karhunen-Loeve basis of $f$ to simulate from it
\begin{equation}
	\tilde{f}=\sum_{1\leq i_1,i_2\leq N}\sqrt{\lambda_{i_1,i_2}}\xi_i\varphi_{i_1,i_2},
\end{equation}
where $(\xi_{i_1,i_2})_{1\leq i_1,i_2\leq N}$ are i.i.d. $\mathcal{N}(0,1)$ random variables. We consider $N=10$ (100 basis functions) thus $\tilde{u}=(\lambda,D,\tilde{f})$ is of dimension 102. All quantities related to negative log-likelihood derivatives (Gradient and Gauss-Newton Hessian matrix) are numerically computed using discrete adjoint methods (see \cite{Hinze2009} or \cite{Heinkenschloss2008}) to keep scalability in $N$. The initial point in the chain is chosen at the MAP location, obtained by minimization of the functional in equation \ref{eq_gen_ons_mach} (Prior based initialization results in long burnin phase). Practical optimization is done using L-BFGS-B algorithm from the Scipy library \cite{Byrd1995}.

\subsection{Results}
\label{sec_Melanogaster}

We now turn to our main objective, the inversion and uncertainty quantification of gap-gene protein concentration from \cite{Becker2012}. The dataset consists of 508 different measures which are non-uniformly spread in time and space (precise repartition can be seen in figure \ref{fig_MAPpost}). The noise variance parameter is estimated using the following routine (10 iterations, 3 multi-start each):
\begin{enumerate}
	\item Find $u_{MAP}$ minimizing $I$ using the current noise level $\sigma_\eta^2$.
	\item Update the current variance estimation $\sigma_\eta^2=\frac{2}{n-1}\lVert z-\mathcal{G}(u_{MAP})\rVert^2$.
	\item Go back to $1$.
\end{enumerate}
The resulting estimated noise level is $\sigma_\eta^2=11,77$. With this estimated value, we compute our initial MAP estimate (numerical minimization of the Onsager-Machlup functional) and use it as initial point in the MCMC sampling. The Markov chain is ran for $11000$ total iterations and the resulting traceplot is given in figure \ref{fig_traceplot} for negative log-likelihood, decay, diffusion and first three components of $f$.
\begin{figure}[h!]
	\centering
	\includegraphics[width=1.0\linewidth]{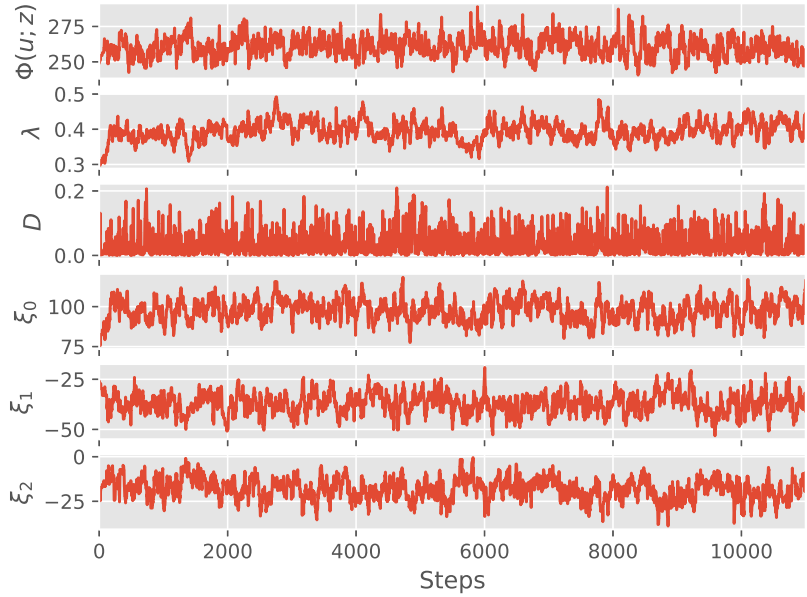}
	\caption{Trace plots of $\Phi(u;z)$, $\lambda$, $D$, $\xi_0$, $\xi_1$ and $\xi_2$ (the first 1000 iterations are burned).}
	\label{fig_traceplot}
\end{figure}
The first thousand iterations are used as burnin and according to the autocorrelation function (figure \ref{fig_acf}), we choose to keep one iteration out of a hundred as posterior sample (thinning).
\begin{figure}[h!]
	\centering
	\includegraphics[width=1.0\linewidth]{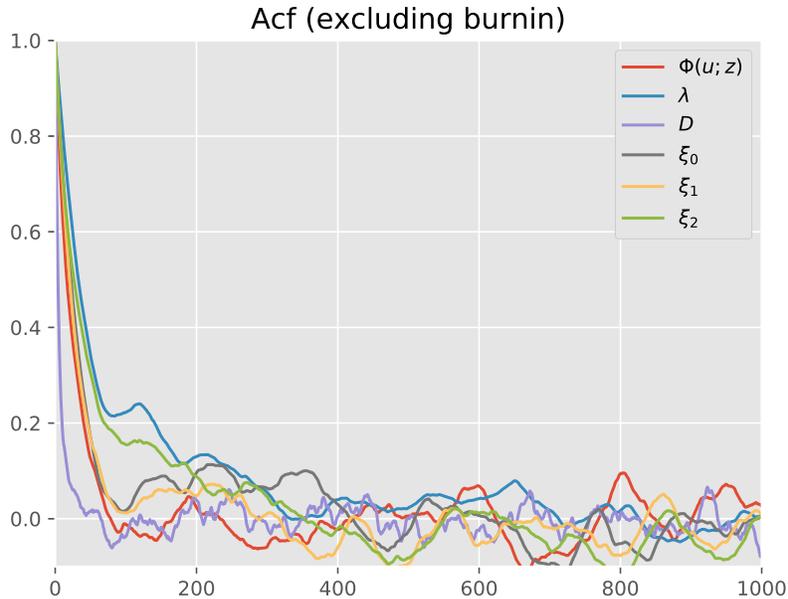}
	\caption{Autocorrelation of negative of $\Phi(u;z)$, $\lambda$, $D$, $\xi_0$, $\xi_1$ and $\xi_2$, excluding burnin sample.}
	\label{fig_acf}
\end{figure}
From this, we compute both posterior mean and MAP estimates, the precise values of decay, diffusion, negative log-likelihood and Onsager-Machlup functional being given in table \ref{table_estimates}.
\begin{table}[h!]
	\centering
	\begin{tabular}{c|c|c|c|c}
		Parameter & $\lambda$ & $D$ & $\Phi(u;z)$ & $I(u)$ \\
		\hline
		MAP (Onsager-Machlup) & $3.00*10^{-1}$ & $1.0*10^{-8}$ & $248.91$ & $440.51$ \\ 
		MAP (MCMC) & $4.25*10^{-1}$ & $3.23*10^{-2}$ & $246.72$ & $460.73$ \\
		Conditional mean (MCMC) & $3.97*10^{-1}$ & $3.30*10^{-2}$ & $243.54$ & $422.41$\\
	\end{tabular}
	\caption{Values of decay, diffusion, negative log-likelihood and Onsager-Machlup functional for different estimators.}
	\label{table_estimates}
\end{table}
Additionally to the estimated values, one can also look at the marginal distribution on figure \ref{fig_scatter}.
\begin{figure}[h!]
	\centering
	\includegraphics[width=1\linewidth]{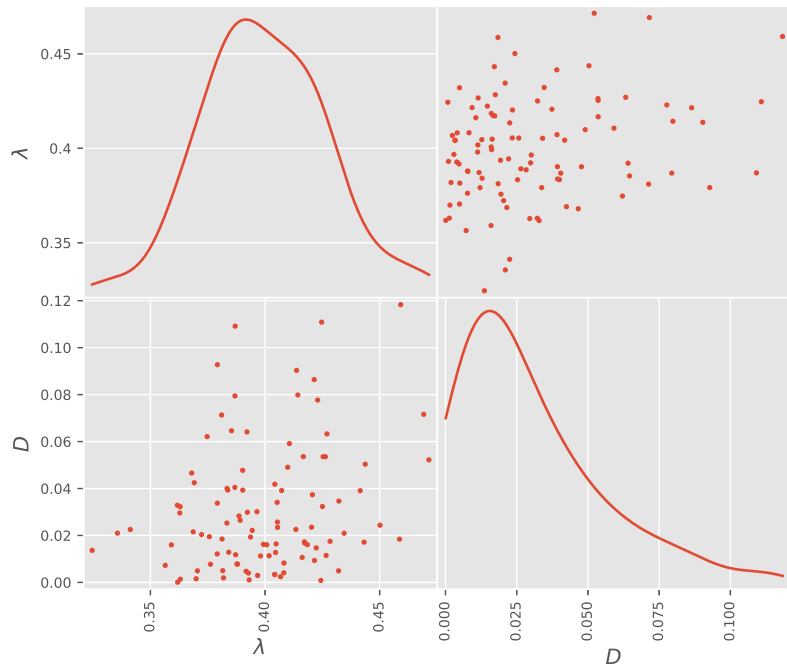}
	\caption{Marginal posterior distributions for $\lambda\vert z$ and $D\vert z$.}
	\label{fig_scatter}
\end{figure}
Concerning the MAP estimator (figure \ref{fig_MAPpost}), we recover both events described in \cite{Becker2012} and \cite{Becker2013}, that is 2 pikes of protein concentrations.
\begin{figure}[h!]
	\centering
	\includegraphics[width=1.0\linewidth]{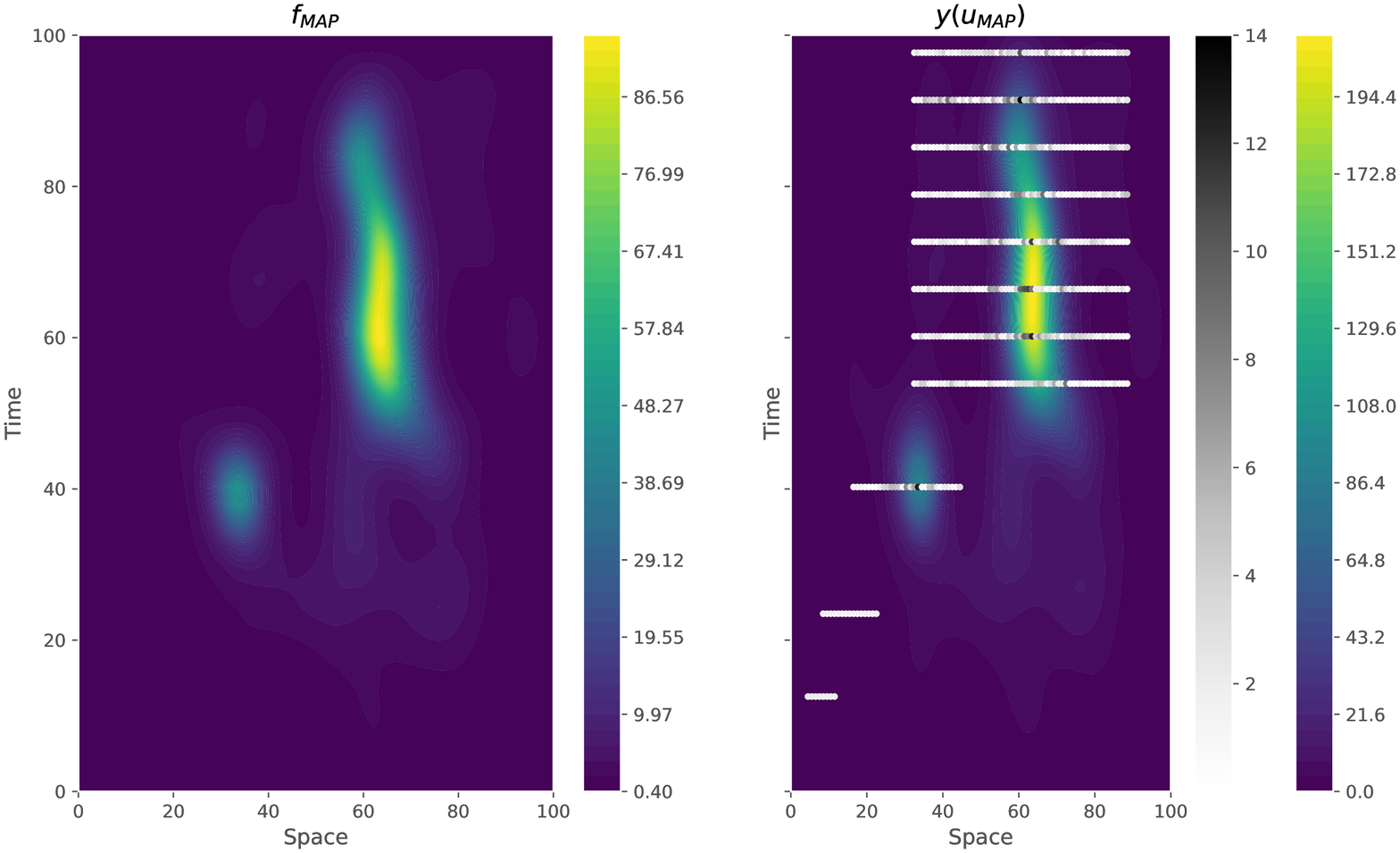}
	\caption{MAP estimator from MCMC sampling. Left: Estimated source term $f_{MAP}$, right: estimated solution $y(u_{MAP})$ with absolute error at data locations. Grey bar represents the error level between data and $y(u_{MAP})$.}
	\label{fig_MAPpost}
\end{figure}
The first happens on the anterior part of the embryo in the early experiment $(x=35,t=35)$. The second is much more intense and happens in the posterior part during the second half of the experiment. The estimated source explains these with an intense and localized increase in concentration. Finally, the uncertainty on both source and solution around data seems to be really low, which provides a good confidence on the level of mRNA at this time and part of the embryo (see figure \ref{fig_var}). However, the point-wise variance on the solution $y$ remains important before the first observations.
\begin{figure}[h!]
	\centering
	\includegraphics[width=1.0\linewidth]{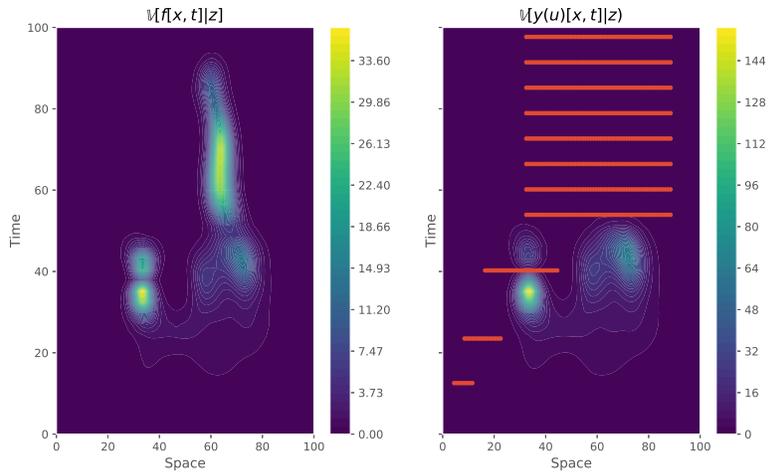}
    \caption{Posterior point-wise variance of source (left figure) and solution (right figure).}
	\label{fig_var}
\end{figure}

\section{Conclusion}
In this work, we applied the Bayesian inverse problem methodology from \cite{Stuart2010a} to a practical Biological dynamical system. Doing so, we provide a rigorous and detailed analysis of the forward model, existence and continuity of the posterior measure, characterization of maximum a posteriori (MAP) estimates and state-of-the-art MCMC methodology. Because the forward MAP is non-linear, the unicity of posterior modes is unclear and it appears that local maximas are present. Nevertheless, the Bayesian methodology provides both a regularized solution to the problem, while giving a quantification of uncertainty. However, the estimation of prior hyper-parameters is still out of reach, giving poor confidence in the estimated variance. This direction requires further research, that we will try to address in a future work.

\section{Aknowledgments}

This work has been supported by Colciencias and ECOS-Nord under the grant C15M04. The authors would like to thank Dr. Xavier Bay for his precious advice and proof reading.

\appendix

\section{Proofs}

\begin{proof}[Proof of theorem \ref{ThmSolutionOperator}]
	Using standard notations in PDE theory, let $\Omega=]0,L[$, $H=L^2(\Omega)$, $V=H^1_0(\Omega)$, $V^*=H^{-1}(\Omega)$, $\mathcal{P}=\mathbb{R}\times]0,\infty[\times L^2([0,T],V^*)$ and
	\begin{equation*}
		W([0,T],H,V)=\left\lbrace y\in L^2([0,T],H),\;y'\in L^2([0,T],V^*)\right\rbrace,
	\end{equation*}
	equipped with the norm $\lVert y\rVert_{W([0,T],H,V)}=\left(\lVert y\rVert^2_{L^2([0,T],V)}+\lVert y'\rVert^2_{L^2([0,T],V^*)}\right)^{\frac{1}{2}}$,
	then the weak form associated to equation \ref{Eq_PDE} has the following reduced form:
	\begin{equation*}
	\text{Find }y\in W([0,T],H,V),\;\langle F(y,u),v\rangle=0,\;\forall v\in L^2([0,T],V),
	\end{equation*}
	with
	\begin{equation*}
	\begin{split}
	\langle F(y,u),v\rangle&=\int_{0}^{T}\langle y_t(t)-f(t),v(t)\rangle_{V^*,V}+\lambda\langle y(t),v(t)\rangle_{H}+D\langle y_x(t),v_x(t)\rangle_{H}dt,\\
	&=\int_{0}^{T}\langle y_t(t)-f(t),v(t)\rangle_{V^*,V}+B(y(t),v(t),\lambda,D)dt.
	\end{split}
	\end{equation*}
	\begin{itemize}
		\item \textbf{Existence of a solution map}. The PDE operator is uniformly parabolic whenever $D>0$, thus there exists a unique weak solution of equation \ref{Eq_PDE} in $W(0,T,H,V)$ for every $u\in\mathcal{P}$ (see chapter 7 in \cite{Evans2010} for instance). Moreover, we have the following energy estimate:
		\begin{equation*}
			\lVert y(u)\rVert_{W(0,T,H,V)}\leq \frac{C}{\sqrt{D}}\lVert f\rVert_{L^2([0,T],V^*)}
		\end{equation*}
		with $C$ a constant independent of $u$.
		\item \textbf{Second order Fréchet differentiability of the solution map}. Let $u,h^u\in\mathcal{P}$ such that  $u+h^u\in\mathcal{P}$, and $h^y\in W(0,T,H,V)$ then $\forall v\in L^2([0,T],V)$:
		\begin{equation*}
		\langle F(y+h^y,u+h^u)-F(y,u),v\rangle=\langle DF(y,u)[h^y,h^u],v\rangle+c(h^u,h^y)
		\end{equation*}
		with
		\begin{equation*}
		\vert c(h^u,h^y)\vert=\left\vert h^\lambda\int_{0}^{T}\langle h^y,v\rangle_H dt+h^D\int_0^T\langle h^y_x,v_x\rangle_H dt\right\vert\leq C\lVert v\rVert_{L^2([0,T],V)}\lVert h\rVert_U^2,
		\end{equation*}
		with $C$ an other constant independent from $u$ and:
		\begin{equation*}
		\begin{split}
		\langle DF(y,u)[h^y,h^u],v\rangle&=\int_{0}^{T}\langle h_t^y,v\rangle_{V^*,V}dt+\int_{0}^{T}h^\lambda\langle y,v\rangle_Hdt\\
		&+\int_{0}^{T}h^D\langle y_x,v_x\rangle_Hdt+\int_{0}^{T}\lambda\langle h^y,v\rangle_Hdt\\
		&+D\int_{0}^{T}\langle h^y_x,v_x\rangle_Vdt-\int_{0}^{T}\langle f,v\rangle_{V^*,V}dt.
		\end{split}
		\end{equation*}
		Moreover, we have:
		\begin{equation*}
		\vert \langle DF(y,u)[h^y,h^u],v\rangle\vert\leq\lVert (h^u,h^y)\rVert\lVert v\rVert
		\end{equation*}
		thus $DF$ is bounded, which shows that $F$ is Fréchet-differentiable on $\mathcal{P}$. Consider $F_y$ the partial derivative of $F$ w.r.t. its first variable:
		\begin{equation*}
		\langle F_y(y,u)h^y,v\rangle=\int_{0}^{T}\langle h_t^y,v\rangle+\lambda \langle h^y,v\rangle+D\langle h_x^y,v_x\rangle dt.
		\end{equation*}
		which defines a unique solution $h\in W(0,T,H,V)$ whenever $D>0$ (using same arguments than previously). Here, $F_y^{-1}$ is clearly bounded. Because $F$ is differentiable and $F_y^{-1}$ exists and is bounded, the implicit function theorem applies and $y$ is differentiable on $\mathcal{P}$. The second order differentiability uses the same arguments.
		\item \textbf{Local Lipschitz continuity of the solution map}. Let $u\in\mathcal{P}$, $r>0$ such that $\mathcal{B}(u,r)\subset\mathcal{P}$ and $(u_1,u_2)\in\mathcal{B}(u,r)$. There exists two unique solution with respect to $(u_1,u_2)$ satisfying $\forall i\in\lbrace 1,2\rbrace$, $\forall v\in L^2([0,T],V)$:
		\begin{equation*}
			\langle y_i',v\rangle_{V^*,V}+B(y_i,v,\lambda_i,D_i)=\langle f_i,v\rangle_{V^*,V}\;\text{for almost every }t\in[0,T],
		\end{equation*}
		which leads to $\forall v\in L^2([0,T],V)$ and almost-every $t\in[0,T]$:
		\begin{equation*}
			\langle y_1'-y_2',v\rangle_{V^*,V}+B(y_1,v,\lambda_1,D_1)-B(y_2,v,\lambda_2,D_2)=\langle f_1-f_2,v\rangle_{V^*,V}.
		\end{equation*}
		and equivalently:
		\begin{equation*}
			\begin{split}
			&\langle y_1'-y_2',v\rangle_{V^*,V}+B(y_1-y_2,v,\lambda_1,D_1)\\
			&=\langle f_1-f_2,v\rangle_{V^*,V}+B(y_2,v,\lambda_1,D_1)-B(y_2,v,\lambda_2,D_2).
			\end{split}
			\label{eq_estimationv}
		\end{equation*}
		Now, letting $v=y_1-y_2$ and using the coercivity of $B$ we get:
		\begin{equation}
			\begin{split}
			&\langle y_1'-y_2',y_1-y_2\rangle_{V^*,V}+D_1\lVert y_1-y_2\rVert^2_{V}\\
			&\leq\langle f_1-f_2,y_1-y_2\rangle_{V^*,V}+B(y_2,y_1-y_2,\lambda_1,D_1)-B(y_2,y_1-y_2,\lambda_2,D_2).
			\end{split}
			\label{eq_estimation}
		\end{equation}
		Now, we use the identity $\frac{d}{dt}\left(\frac{1}{2}\lVert y_1-y_2\rVert_H^2\right)=\langle y_1'-y_2',y_1-y_2\rangle_{V^*,V}$ and drop the positive term in the left hand side to get
		\begin{equation*}
		\begin{split}
		&\frac{d}{dt}\left(\frac{1}{2}\lVert y_1-y_2\rVert_H^2\right)\\
		&\leq \langle f_1-f_2,y_1-y_2\rangle_{V^*,V}+B(y_2,y_1-y_2,\lambda_1,D_1)-B(y_2,y_1-y_2,\lambda_2,D_2).
		\end{split}
		\end{equation*}
		We integrate between $0$ and $t$, use Cauchy-Schwartz and Poincarré's inequalities and $y_1(0)=y_2(0)=0$ to obtain the following estimation
		\begin{equation*}
			\frac{1}{2}\lVert y_1-y_2\rVert_H^2\leq C\lVert y_1-y_2\rVert_{L^2([0,T],V)}\lVert u_1-u_2\rVert,\;t-a.e.
		\end{equation*}
		The same reasoning applies now to equation \ref{eq_estimation} to get
		\begin{equation*}
			\lVert y_1-y_2\rVert_{L^2([0,T],V)}\leq C\lVert u_1-u_2\rVert.
		\end{equation*}
		We start back from equation \ref{eq_estimationv} to get:
		\begin{equation*}
			\begin{split}
			&\langle y_1'-y_2',v\rangle_{V^*,V}=-B(y_1-y_2,v,\lambda_1,D_1)\\
			&+\langle f_1-f_2,v\rangle_{V^*,V}+B(y_2,v,\lambda_1,D_1)-B(y_2,v,\lambda_2,D_2),
			\end{split}
		\end{equation*}
		and taking the supremum on the unit ball of $V$ before integrating:
		\begin{equation*}
			\begin{split}
			&\lVert y_1'-y_2'\rVert_{L^2([0,T],V^*)}\leq C_2\lVert u_1-u_2\rVert
			\end{split}
		\end{equation*}
		which completes the proof.
	\end{itemize}
\end{proof}

\begin{proof}[Proof of theorem \ref{ThmPosteriorMeasure}]
	Following the program in \cite{Dashti2015}, let us first show that $\mu_z$ exists and is unique and then the continuity of the posterior with respect to the Hellinger distance.
	\begin{itemize}
		\item (Existence and unicity of $ \mu_z$). Let $\mu_0$ be a probability measure defined as in equation \ref{EqPriorMeasure} and consider the following Gaussian negative log-likelihood:
		\begin{equation*}
		\Phi(u;z)=\frac{1}{2\sigma_\eta^2}\lVert z-\mathcal{G}(u)\rVert^2.
		\end{equation*}
		Since $\mathcal{G}$ is differentiable (theorem \ref{ThmSolutionOperator}), $\Phi$ is measurable w.r.t. $\mu_0$ for all $z\in\mathbb{R}^n$. Consider now the following integral
		\begin{equation*}
		Z(z)=\int_{\mathcal{U}}\exp(-\Phi(u;z))\mu_0(du).
		\end{equation*}
		The negative log-likelihood being positive, the integral is finite. Furthermore, we have $\forall (u,z)\in\mathcal{U}\times \mathbb{R}^n,\;\Phi(u;z)<+\infty$ thus $Z(z)>0$ for every $z\in\mathbb{R}^n$. In other words, the following function
		\begin{equation*}
		\frac{\exp(-\Phi(u;z))}{Z(z)}
		\end{equation*}
		defines a probability density function w.r.t. $\mu_0$, and the associated measure is $ \mu_z$, the (unique) posterior distribution of $u\vert z$.
		\item (Continuity in $z$). It remains to show that the posterior distribution is continuous in $z$ w.r.t. the Hellinger distance. Following the lines of \cite{Stuart2010a,Dashti2015} we proove the following sufficient condition
		\begin{equation*}
			\vert \Phi(u;z_1)-\Phi(u;z_2)\vert\leq g(u)\lVert z_1-z_2\rVert,
		\end{equation*}
		with $g\in L^1(\mu_0)$.
		\begin{eqnarray*}
			\vert \Phi(u;z_1)-\Phi(u;z_2)\vert&=&\frac{1}{2\sigma_\epsilon^2}\left\vert\lVert z_1\rVert^2-\lVert z_2\rVert^2+2\langle z_2-z_1,\mathcal{G}(u)\rangle\right\vert,\\
			&=&\frac{1}{2\sigma_\epsilon^2}\left\vert(\lVert z_1\rVert-\lVert z_2\rVert)(\lVert z_1\rVert+\lVert z_2\rVert)+2\langle z_2-z_1,\mathcal{G}(u)\rangle\right\vert,\\
			&\leq&\frac{1}{\sigma_\eta^2}(r+\lVert\mathcal{G}(u)\rVert)\lVert z_1-z_2\rVert.
		\end{eqnarray*}
		Now, using continuous injection between spaces, we have
		\begin{equation*}
			\lVert\mathcal{G}(u)\rVert\leq n\lVert y(u)\rVert_\infty\leq nC_1\lVert y(u)\rVert_{W}\leq nC_2\frac{\lVert f\rVert_{L^2}}{D},
		\end{equation*}
		the last inequality is given by the energy estimate of theorem \ref{ThmSolutionOperator}. It remains to see that
		\begin{equation*}
			g(u):=\frac{1}{\sigma_\eta^2}\left(r+\frac{\lVert f\rVert}{D}\right)
		\end{equation*}
		is in $L^1(\mu_0)$, that is
		\begin{equation*}
			\mathbb{E}^{\mu_0}\left[g(u)\right]=\mathbb{E}^{\mu_0^D}\left[\frac{1}{D}\right]\mathbb{E}^{\mu_0^f}[\lVert f\rVert_{L^2}].
		\end{equation*}
		The first term in the right hand side product is finite by assumption on $\mu_0^D$. The second term is finite as well since
		\begin{equation*}
			\lVert \exp(f^*)\rVert_{L^2}\leq\lVert \exp(f^*)\rVert_\infty\leq\exp(\lVert f^*\rVert_\infty)
		\end{equation*}
		which is $\mu_0^f$ integrable by the assumption on $\mu_0^f$. Finally, Fubini's theorem gives the result on the integrability of $g$. Now, it remains to follow the proof from \cite{Stuart2010a} as is.
	\end{itemize}
\end{proof}

\begin{proof}[Proof of theorem \ref{ThmMAP}]
	The posterior measure $\mu_z$ is absolutely continuous w.r.t. $\mu$ with the following Radon-Nikodym density:
	\begin{equation*}
		\frac{d \mu_z}{d\mu_{ref}}(u)=\frac{d \mu_z}{d\mu_0}(u)\frac{d\mu_0}{d\mu_{ref}}(u)=\frac{1}{Z(z)}\exp(-\tilde{\Phi}(u;z)).
	\end{equation*}
	where $\tilde{\Phi}(u;z)=\Phi(u;z)-\ln\left(\frac{d\mu_0}{d\mu_{ref}}(u)\right)$. Let us now show that $\Phi$ is locally Lispchitz in its first argument:
	\begin{eqnarray*}
		\vert\Phi(u_1;z)-\Phi(u_2;z)\vert&=&\frac{1}{2\sigma_\eta^2}\left\vert \lVert z-\mathcal{G}(u_1)\rVert^2-\lVert z-\mathcal{G}(u_2)\rVert^2\right\vert,\\
		&=&\frac{1}{2\sigma_\eta^2}\left\vert \lVert \mathcal{G}(u_1)\rVert^2-\lVert \mathcal{G}(u_2)\rVert^2+2\langle z,\mathcal{G}(u_2)-\mathcal{G}(u_1)\rangle\right\vert,\\
		&\leq& \frac{\lVert \mathcal{G}(u_1)\rVert+\lVert \mathcal{G}(u_2)\rVert+2\lVert z\rVert}{2\sigma_\eta^2}\lVert \mathcal{G}(u_1)-\mathcal{G}(u_2)\rVert,
	\end{eqnarray*}
	and since $\lVert \mathcal{G}(u_1)-\mathcal{G}(u_2)\rVert\leq C\lVert y(u_1)-y(u_2)\rVert$, we conclude that $\Phi$ and
	thus $\tilde{\Phi}$ are locally Lipschitz. Now, we follow the lines of \cite{Dashti2013} it comes:
	\begin{eqnarray*}
		\frac{J^\delta(u_1)}{J^\delta(u_2)}&=&\frac{\int_{B^\delta(u_1)}\exp(-\tilde{\Phi}(u;z))\mu_{ref}(du)}{\int_{B^\delta(u_2)}\exp(-\tilde{\Phi}(v;z))\mu_{ref}(dv)},\\
		&=&\frac{\int_{B^\delta(u_1)}\exp(-\tilde{\Phi}(u;z)+\tilde{\Phi}(u_1;z))\exp(-\tilde{\Phi}(u_1;z))\mu_{ref}(du)}{\int_{B^\delta(u_2)}\exp(-\tilde{\Phi}(v;z)+\tilde{\Phi}(u_2;z))\exp(-\tilde{\Phi}(u_2;z))\mu_{ref}(dv)}.
	\end{eqnarray*}
	Now,
	\begin{eqnarray*}
		\frac{J^\delta(u_1)}{J^\delta(u_2)}\leq\exp\left(\delta C-\tilde{\Phi}(u_1,z)+\tilde{\Phi}(u_2;z)\right)\frac{\int_{B^\delta(u_1)}\mu_{ref}(du)}{\int_{B^\delta(u_2)}\mu_{ref}(dv)}
	\end{eqnarray*}
	and finally
	\begin{equation*}
		\limsup_{\delta\to 0}\frac{J^\delta(u_1)}{J^\delta(u_2)}\leq\exp\left(-I(u_1)+I(u_2)\right).
	\end{equation*}
	A similar argument leads to
	\begin{equation*}
	\liminf_{\delta\to 0}\frac{J^\delta(u_1)}{J^\delta(u_2)}\geq\exp\left(-I(u_1)+I(u_2)\right).
	\end{equation*}
	We conclude that $\lim_{\delta\to 0}\frac{J^\delta(u_1)}{J^\delta(u_2)}=\exp\left(-I(u_1)+I(u_2)\right)$. For a fixed value $u_2$, this quantity is maximized when $u_1$ is a minimizer of $I$, the proof is then complete.
\end{proof}

\bibliographystyle{plain}
\bibliography{Bib_Melanogaster}

\end{document}